\documentclass[12pt]{amsart}
\usepackage{amstext,amsfonts,amssymb,amscd,amsbsy,amsmath,verbatim, mathrsfs, fullpage}
\usepackage[alphabetic,abbrev,lite]{amsrefs} 
\usepackage{microtype}

\newcommand{\shrinkmargins}[1]{
  \addtolength{\textheight}{#1\topmargin}
  \addtolength{\textheight}{#1\topmargin}
  \addtolength{\textwidth}{#1\oddsidemargin}
  \addtolength{\textwidth}{#1\evensidemargin}
  \addtolength{\topmargin}{-#1\topmargin}
  \addtolength{\oddsidemargin}{-#1\oddsidemargin}
  \addtolength{\evensidemargin}{-#1\evensidemargin}
  }

\shrinkmargins{.7}

\newcommand{\field}[1]{\mathbb{#1}}

\newcommand{\Z}{\field{Z}}

\newcommand{\F}{\field{F}}

\newcommand{\R}{\field{R}}

\newcommand{\SSS} {\mathcal{S}}

\newcommand{\ra}{\rightarrow}

\newcommand{\beq}{\begin{displaymath}}
\newcommand{\eeq}{\end{displaymath}}
\newcommand{\beqn}{\begin{equation}}
\newcommand{\eeqn}{\end{equation}}

\theoremstyle{plain}
\newtheorem{thm}{Theorem}
\newtheorem{prop}[thm]{Proposition}
\newtheorem{cor}[thm]{Corollary}

\theoremstyle{definition}

\theoremstyle{remark}
\newtheorem{rem}[thm]{Remark}

\title{On large subsets of $\F_q^n$ with no three-term arithmetic progression}
\author{Jordan S. Ellenberg }
\address{Department of Mathematics, University of Wisconsin, Madison, WI 53706}
\email{ellenber@math.wisc.edu}
\urladdr{http://www.math.wisc.edu/~ellenber/}

\author{Dion Gijswijt}
\address{Delft Institute of Applied Mathematics, Delft University of Technology}
\email{d.c.gijswijt@tudelft.nl}
\urladdr{http://homepage.tudelft.nl/64a8q/}

\thanks{The first author is supported by NSF Grant DMS-1402620 and a Guggenheim Fellowship.  We thank Terry Tao, Tim Gowers, and Seva Lev for useful discussions during the production of this paper.}

\date{27 May 2016}

\begin{document}

\begin{abstract}
In this note, we show that the method of Croot, Lev, and Pach can be used to bound the size of a subset of $\F_q^n$ with no three terms in arithmetic progression by $c^n$ with $c < q$.  For $q=3$, the problem of finding the largest subset of $\F_3^n$ with no three terms in arithmetic progression is called the {\em cap problem}.  Previously the best known upper bound for the affine cap problem, due to Bateman and Katz~\cite{bateman}, was on order $n^{-1-\epsilon} 3^n$.
\end{abstract}

\maketitle

The problem of finding large subsets of an abelian group $G$ with no three-term arithmetic progression, or of finding upper bounds for the size of such a subset, has a long history in number theory.  The most intense attention has centered on the cases where $G$ is a cyclic group $\Z/N\Z$ or a vector space $(\Z/3\Z)^n$, which are in some sense the extreme situations.  We denote by $r_3(G)$ the maximal size of a subset of $G$ with no three-term arithmetic progression.  The fact that $r_3((\Z/3\Z)^n)$ is $o(3^n)$ was first proved by Brown and Buhler~\cite{brownbuhler}, which was improved to $O(3^n/n)$ by Meshulam~\cite{meshulam}. The best known upper bound, $O(3^n/n^{1+\epsilon})$, is due to Bateman and Katz~\cite{bateman}. The best lower bound, by contrast, is around $2.2^n$~\cite{edel}. 

The problem of arithmetic progressions in $(\Z/3\Z)^n$ has sometimes been seen as a model for the corresponding problem in $\Z/N\Z$.  We know (for instance, by a construction of Behrend~\cite{behrend}) that $r_3(\Z/N\Z)$ grows more quickly than $N^{1-\epsilon}$ for every $\epsilon > 0$.  Thus it is natural to ask whether $r_3((\Z/3\Z)^n)$ grows more quickly than $(3-\epsilon)^n$ for every $\epsilon > 0$.  In general, there has been no consensus on what the answer to this question should be.

In the present paper we settle the question, proving that for all odd primes $p$, $r_3((\Z/p\Z)^n)^{1/n}$ is bounded away from $p$ as $n$ grows.  

The main tool used here is the polynomial method, in particular the use of the polynomial method developed in the breakthrough paper of Croot, Lev, and Pach~\cite{CLP}, which drastically improved the best known upper bounds for $r_3((\Z/4\Z)^n)$.  In this case, they show that a subset of $G$ with no three-term arithmetic progression has size at most $c^n$ for some $c < 4$.  In the present paper, we show that the ideas of their paper can be extended to vector spaces over a general finite field.

\begin{rem}  The ideas of this paper were developed independently and essentially simultaneously by the two authors.  Since the arguments of our two papers were essentially identical, we present them as joint work.
\end{rem}

We begin with a slight generalization of Lemma 1 of \cite{CLP}.
Let $\F_q$ be a finite field and let $n$ be a positive integer. Let $M_n$ be the set of monomials in $x_1,\ldots, x_n$ whose degree in each variable is at most $q-1$, and let $S_n$ be the $\F_q$-vector space they span. 

Observe that the evaluation map $e: S_n\to\F_q^{\F_q^n}$ given by $e(p):=(p(a))_{a\in\F_q^n}$ is a linear isomorphism. Indeed, both spaces have dimension $q^n$ and the map $e$ is surjective since for every $a\in \F_q^n$ the polynomial $\prod_{i=1}^n (1-(x_i-a_i)^{q-1})$ is mapped to the indicator function of point $a$. 

For any real number $d$ in $[0,2n]$, let $M_n^d$ be the set of monomials in $M_n$ of degree at most $d$ and $S_n^d$ the subspace of $S_n$ they span. Write $m_d$ for the dimension of $S_n^d$.  By a slight abuse of notation, we use ``polynomial of degree at most $d$'' to mean an element of $S_n^d$.

\begin{prop}  Let $\F_q$ be a finite field and let $A$ be a subset of $\F_q^n$. Let $\alpha, \beta, \gamma$ be three elements of $\F_q$ which sum to $0$.

Suppose $P \in S_n^d$ satisfies $P(\alpha a+\beta b) = 0$ for every pair $a,b$ of distinct elements of $A$.  Then the number of $a \in A$ for which $P(-\gamma a) \neq 0$ is at most $2 m_{d/2}$.

\label{pr:clp}
\end{prop}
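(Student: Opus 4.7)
The plan is to follow the Croot--Lev--Pach strategy: exhibit the matrix of values of $P$ on $A \times A$, bound its rank from above using a degree-splitting decomposition, and compare this to the rank coming from the hypothesis that the matrix is diagonal.

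First I would observe that since $\alpha + \beta + \gamma = 0$, we have $\alpha a + \beta a = -\gamma a$, so the quantities $P(-\gamma a)$ for $a \in A$ are exactly the diagonal entries of the matrix $M \in \F_q^{A \times A}$ defined by $M_{a,b} := P(\alpha a + \beta b)$. The hypothesis says that all off-diagonal entries of $M$ vanish, so $M$ is a diagonal matrix and its rank equals $\#\{a \in A : P(-\gamma a) \neq 0\}$. The whole proof then reduces to proving the upper bound $\mathrm{rank}(M) \le 2 m_{d/2}$.

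To bound the rank, I would expand $P(\alpha x + \beta y)$ as a polynomial in the $2n$ variables $x_1,\dots,x_n,y_1,\dots,y_n$. Each monomial appearing has the form $x^{u} y^{v}$ with $|u| + |v| \le d$, so $\min(|u|,|v|) \le d/2$. Partitioning the monomials according to whether $|u| \le d/2$ or $|u| > d/2$ (in which case $|v| < d/2$), I can group terms to write
\[
P(\alpha x + \beta y) \;=\; \sum_{\mu \in M_n^{d/2}} \mu(x)\, f_\mu(y) \;+\; \sum_{\nu \in M_n^{d/2}} g_\nu(x)\, \nu(y),
\]
for suitable polynomials $f_\mu, g_\nu$. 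Specializing to $x = a$, $y = b$ with $a,b \in A$, this exhibits $M$ as a sum of two matrices, each a sum of $m_{d/2}$ rank-one outer products (one factor being a vector in $\F_q^A$ indexed by $a \mapsto \mu(a)$ or $a \mapsto g_\nu(a)$, the other indexed by $b$). Hence $\mathrm{rank}(M) \le 2 m_{d/2}$.

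Combining the two bounds on $\mathrm{rank}(M)$ gives the desired inequality. The only step that requires real thought is the degree-splitting decomposition; everything else (the diagonal observation, and the rank-one expansion of each summand) is bookkeeping. I do not expect any serious obstacle, though I would want to be careful that the monomials $\mu \in M_n^{d/2}$ used to index the decomposition really come from $M_n$, i.e.\ have each individual-variable degree at most $q-1$ — this holds automatically because substituting $\alpha x + \beta y$ into a monomial of $M_n$ and expanding cannot raise the per-variable degree above $q-1$ in either the $x$'s or the $y$'s.
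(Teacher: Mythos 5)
Your proposal is correct and follows essentially the same argument as the paper: expand $P(\alpha x+\beta y)$, split each monomial according to which factor has degree at most $d/2$ to write the $A\times A$ matrix of values as a sum of $2m_{d/2}$ rank-one matrices, and compare with the fact that the hypothesis forces the matrix to be diagonal with diagonal entries $P(-\gamma a)$.
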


\begin{rem}
The proof of Proposition~\ref{pr:clp} is essentially the same as that of Lemma 1 of Croot-Lev-Pach~\cite{CLP}, which proves the proposition in the case $(\alpha,\beta,\gamma) = (1,-1,0)$.  In the $\gamma=0$ case, the conclusion of the proposition is that $P(0) = 0$ once $n > 2 m_{d/2}$; it turns out to be essential for the present application to have the added flexibility of forcing $P$ to take vanish at a larger set of places.
\end{rem}

\begin{proof}

\medskip

Any $P \in S_n^d$ is a linear combination of monomials of degree at most $d$, so we can write
\begin{equation}
\label{pxy}
P(\alpha x+\beta y) = \sum_{m,m' \in M_n^d\,:\,\deg(mm') \leq d} c_{m,m'} m(x) m'(y). 
\end{equation}

In each summand of \eqref{pxy}, at least one of $m$ and $m'$ has degree at most $d/2$.  We can therefore write (not necessarily uniquely)
\beq
P(\alpha x+\beta y) = \sum_{m \in M_n^{d/2}} m(x) F_m(y) + \sum_{m \in M_n^{d/2}} m(y) G_{m}(x)
\eeq
for some families of polynomials $F_m,G_m$ indexed by $m \in M_n^{d/2}$.

Now let $B$ be the $A \times A$ matrix whose $a,b$ entry is $P(\alpha a + \beta b)$.  Then
\beq
B_{ab} =  \sum_{m \in M_n^{d/2}} m(a) F_m(b) + \sum_{m \in M_n^{d/2}} G_{m}(a) m(b)
\eeq
This is an expression of $B$ as a sum of $2m_{d/2}$ matrices, each one of which visibly has rank $1$.  Thus the rank of $B$ is at most $2 m_{d/2}$.

On the other hand, our hypothesis on $P$ forces $B$ to be a diagonal matrix.  The bound on the rank of $B$ now implies that at most $2 m_{d/2}$ of the diagonal entries of $B$ are nonzero.  This completes the proof.

\end{proof}

\begin{thm} Let $\alpha, \beta, \gamma$ be elements of $\F_q$ such that $\alpha + \beta + \gamma = 0$ and $\gamma \neq 0$, and let $A$ be a subset of $\F_q^n$ such that the equation
\beq
\alpha a_1 + \beta a_2 + \gamma a_3=0
\eeq
has no solutions $(a_1,a_2,a_3) \in A^3$ apart from those with $a_1=a_2=a_3$.  As above, let $m_d$ be the number of monomials in $x_1, \ldots, x_n$ with total degree at most $d$ and in which each variable appears with degree at most $q-1$.

Then $|A| \leq 3m_{(q-1)n/3}$.
\label{th:main}
\end{thm}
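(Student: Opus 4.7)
The plan is to apply Proposition~\ref{pr:clp} to a suitable subspace of polynomials extracted from $S_n^d$, where I take $d=2(q-1)n/3$ so that $d/2=(q-1)n/3$ matches the exponent in the target bound. The hypothesis on $A$ provides a crucial disjointness: if $a,b\in A$ are distinct, then $\alpha a+\beta b$ cannot equal $-\gamma c$ for any $c\in A$, since $\alpha a+\beta b+\gamma c=0$ would force $a=b=c$. Hence $X:=\{\alpha a+\beta b : a,b\in A,\ a\neq b\}$ is disjoint from $-\gamma A$ inside $\F_q^n$.

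Let $W\subseteq S_n^d$ denote the subspace of polynomials that vanish on the complement $\F_q^n\setminus(-\gamma A)$. Since the evaluation map $e$ is injective on $S_n^d$, a standard dimension count yields $\dim W\geq m_d-(q^n-|A|)$. Every $P\in W$ vanishes on $X$ by the disjointness, so Proposition~\ref{pr:clp} applies and shows that $P$ has at most $2m_{d/2}$ nonzero values on $-\gamma A$. On the other hand, evaluation at $-\gamma A$ is injective on $W$ (a polynomial in $S_n^d$ vanishing on all of $\F_q^n$ is zero), so $W$ embeds as a subspace of $\F_q^{-\gamma A}$ in which every vector has support of size at most $2m_{d/2}$. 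Putting a basis of this image in reduced row echelon form with pivots $p_1,\ldots,p_k$ (where $k=\dim W$), the sum of the basis vectors takes a nonzero value at every $p_i$, so has support of size at least $k$. This forces $\dim W\leq 2m_{d/2}$.

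Combining the two dimension bounds gives
\beq
|A|\leq q^n-m_d+2m_{d/2}.
\eeq
To close the argument I will invoke the monomial symmetry: the involution $x_i^{a_i}\mapsto x_i^{q-1-a_i}$ bijects monomials of degree $k$ with monomials of degree $(q-1)n-k$, giving $q^n-m_d=m_{(q-1)n-d-1}$. With $d=2(q-1)n/3$ this quantity is at most $m_{(q-1)n/3}=m_{d/2}$, and therefore $|A|\leq 3m_{(q-1)n/3}$.

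The main conceptual step is the support/RREF argument producing the upper bound $\dim W\leq 2m_{d/2}$, which is the standard mechanism for converting the rank bound of Proposition~\ref{pr:clp} into a density bound on $A$. The main bookkeeping step is the choice of $d$ combined with the complementation symmetry of the $m_d$, which is precisely what selects $(q-1)n/3$ as the correct exponent; integer/parity issues are harmless since $m_d$ depends only on $\lfloor d\rfloor$.
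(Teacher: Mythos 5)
Your proof is correct and follows essentially the same route as the paper: the same disjointness observation, the same space of polynomials in $S_n^d$ vanishing off $-\gamma A$ with the same dimension count, the same appeal to Proposition~\ref{pr:clp}, and the same choice $d=2(q-1)n/3$ with the degree-complementation symmetry of $M_n$. The only (harmless) difference is that you convert the support bound into $\dim W\leq 2m_{d/2}$ via a reduced-row-echelon/pivot argument, where the paper picks an element of maximal support and notes its support has size at least $\dim V$; both are valid, and your minor bookkeeping slip ($q^n-m_d=m_{(q-1)n-d-1}$ only when $d$ is an integer) does not affect the final inequality since you only use the upper bound $m_{(q-1)n/3}$.
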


\begin{proof}
Let $d$ be an integer in $[0,(q-1)n]$.  The space $V$ of polynomials in $S_n^d$ vanishing on the complement of $-\gamma A$ has dimension at least $m_d - q^n + |A|$.  Write $\SSS(A)$ for the set of all elements of $\F_q$ of the form $\alpha a_1 + \beta a_2$, with $a_1$ and $a_2$ distinct elements of $A$.  Then $\SSS(A)$ is disjoint from $-\gamma A$ by hypothesis, so any $P$ vanishing on the complement of $-\gamma A$ vanishes on $\SSS(A)$.  By Proposition~\ref{pr:clp}, we know that $P(-\gamma a)$ is nonzero for at most $2m_{d/2}$ points $a$ of $A$, for every $P$ in $V$.

View the elements of $V$ as functions on $\F_q^n$ and let $P\in V$ have maximal support. Let $\Sigma:=\{a\in \F_q^n : P(a)\neq 0\}$ be the support of $P$. We have $|\Sigma|\geq \dim V$ for otherwise, there would exist a nonzero $Q\in V$ vanishing on $\Sigma$. But then the support of $P+Q$ would strictly contain $\Sigma$, contradicting the choice of $P$. 

Since the support of $P$ is contained in $-\gamma A$, we have $|\Sigma|\leq 2m_{d/2}$ by Proposition \ref{pr:clp}, and hence $\dim V \leq  2m_{d/2}$. It follows that

\beq
m_d - q^n + |A| \leq 2m_{d/2}
\eeq
whence
\beq
|A| \leq 2m_{d/2} + (q^n - m_d).
\eeq

We note that $q^n - m_d$ is the number of $q$-power-free monomials whose degree is {\em greater} than $d$; these are naturally in bijection with those monomials whose degree is less than $(q-1)n-d$, of which there are at most $m_{(q-1)n - d}$.

Taking $d = 2(q-1)n/3$, we thus have
\beq
|A| \leq 2m_{(q-1)n/3} + (q^n - m_{2(q-1)n/3}) \leq 3m_{(q-1)n/3}
\eeq
as claimed.
\end{proof}

It is not hard to check that $m_{(q-1)n/3} / q^n$ is exponentially small as $n$ grows with $q$ fixed.  We can be more precise.  Let $X$ be a variable which takes values $0,1,\ldots,q-1$ with probability $1/q$ each; then $m_{(q-1)n/3} / q^n$ is the probability that $n$ independent copies of $X$ have mean at most $(q-1)/3$.  This is an example of a large deviation problem.  By Cram\'{e}r's theorem~\cite[\S 2.4]{timo}, we have
\beq
\lim_{n \ra \infty} \frac{1}{n} \log (m_{(q-1)n/3} / q^n) = -I((q-1)/3)
\eeq

where $I$ is the {\em rate} function of the random variable $X$, calculated as follows:  $I(x)$ is the supremum, over all $\theta$ in $\R$, of
\begin{equation}
\theta x - \log((1 + e^\theta + \ldots + e^{(q-1)\theta})/q).
\label{eq:sup}
\end{equation}
We note that \eqref{eq:sup} takes the value $0$ at $\theta=0$ and has nonzero derivative at $\theta=0$ unless $x=(q-1)/2$, so the supremum of \eqref{eq:sup} is positive; this shows $m_{(q-1)n/3} = O(c^n)$ for some $c < q$.

\begin{cor}  Let $A$ be a subset of $(\Z/3\Z)^n$ containing no three-term arithmetic progression.  Then $|A| = o(2.756^n)$.
\end{cor}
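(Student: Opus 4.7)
The plan is to apply Theorem~\ref{th:main} in the case $q=3$, $(\alpha,\beta,\gamma) = (1,1,1)$. First I would verify that the hypothesis translates correctly: in $\F_3$ we have $3=0$, so any three-term arithmetic progression $(x, x+d, x+2d)$ satisfies $x + (x+d) + (x+2d) = 3x + 3d = 0$, and conversely, if $a_1 + a_2 + a_3 = 0$ in $\F_3^n$ with $a_1 \neq a_2$ then writing $d = a_2 - a_1$ and using $-1 = 2$ in $\F_3$ gives $a_3 = -a_1 - a_2 = a_1 + 2d$, a genuine AP. Moreover, if any two of the $a_i$ coincide then the equation forces all three to coincide, so the non-trivial solutions correspond precisely to non-trivial $3$-APs. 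Hence Theorem~\ref{th:main} applies and gives $|A| \leq 3 m_{2n/3}$.

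Next I would carry out the large deviation analysis sketched in the paragraph following Theorem~\ref{th:main}. Let $X$ be uniform on $\{0,1,2\}$; then $m_{2n/3}/3^n$ equals the probability that the mean of $n$ independent copies of $X$ is at most $2/3$. Since $2/3$ is below the mean $1$, Cram\'er's theorem gives decay like $e^{-nI(2/3) + o(n)}$. The rate function $I(2/3)$ is the supremum over $\theta\in\R$ of $\tfrac{2}{3}\theta - \log\bigl((1 + e^\theta + e^{2\theta})/3\bigr)$; setting $u = e^\theta$, the first-order condition reads $(u + 2u^2)/(1 + u + u^2) = 2/3$, which simplifies to the quadratic $4u^2 + u - 2 = 0$, whose positive root is $u = (\sqrt{33} - 1)/8$.

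Finally I would substitute this value and compute numerically: $u \approx 0.5931$, $\theta = \log u \approx -0.5223$, and $\log((1+u+u^2)/3) \approx -0.4335$, so $I(2/3) \approx 0.0853$ and therefore $3 e^{-I(2/3)} \approx 2.7549$. Since $2.7549 < 2.756$, this gives $3 m_{2n/3} = o(2.756^n)$, proving the corollary. The main (and really only) delicate point is numerical: the margin between the true asymptotic growth rate $3 e^{-I(2/3)}$ and the quoted constant $2.756$ is only about $10^{-3}$, so one should perform the arithmetic to enough digits to certify the strict inequality, but no conceptual obstacle remains beyond the direct application of Theorem~\ref{th:main} and Cram\'er's theorem.
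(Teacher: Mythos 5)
Your proposal is correct and follows the paper's argument exactly: apply Theorem~\ref{th:main} with $q=3$, $\alpha=\beta=\gamma=1$, then evaluate the Cram\'er rate function at $x=2/3$, where the supremum is attained at $e^\theta=(\sqrt{33}-1)/8$, giving $3e^{-I(2/3)}\approx 2.755<2.756$. The only additions beyond the paper's own (very terse) proof are the explicit check that the no-3-AP condition matches the hypothesis of the theorem and the numerical details, both of which are fine.
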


\begin{proof} Taking $q=3$ and $x=2/3$, the supremum in (\ref{eq:sup}) is attained when $e^\theta=(\sqrt{33}-1)/8$ and we obtain the bound $3e^{-I(2/3)} < 2.756.$  The theorem now follows by applying Theorem~\ref{th:main} with $\alpha = \beta = \gamma = 1$.
\end{proof}

\begin{bibdiv}
\begin{biblist}

\bib{bateman}{article}{
	title={New bounds on cap sets},
	author={Bateman, Michael},
	author={Katz, Nets}
	journal={Journal of the American Mathematical Society},
	volume={25},
	number={2},
	pages={585--613},
	year={2012}
}

\bib{behrend}{article}{
	title={On sets of integers which contain no three terms in arithmetical progression},
	author={Behrend, Felix A},
	journal={Proceedings of the National Academy of Sciences of the United States of America},
	volume={32},
	number={12},
	pages={331},
	year={1946},
	publisher={National Academy of Sciences}
}

\bib{brownbuhler}{article}{
	title={A density version of a geometric Ramsey theorem},
	author={Brown, T.C.}
	author={Buhler, J.P.},
	journal={Journal of Combinatorial Theory, Series A},
	volume={32},
	number={1},
	pages={20--34},
	year={1982},
	publisher={Elsevier}
}

\bib{CLP}{article}{
	title={Progression-free sets in $\mathbf{Z}_4^n$ are exponentially small},
	author={Croot,Ernie},
	author={Lev,Vsevolod},
	author={Pach,P'{e}ter P\'{a}l},
	note={arXiv preprint 1605.01506}
	year={2016}
}

\bib{edel}{article}{
	title={Extensions of generalized product caps},
	author={Edel, Yves},
	journal={Designs, Codes and Cryptography},
	volume={31},
	number={1},
	pages={5--14},
	year={2004},
	publisher={Springer}
}
	
\bib{meshulam}{article}{	
	title={On subsets of finite abelian groups with no 3-term arithmetic progressions},
	author={Meshulam, Roy},
	journal={Journal of Combinatorial Theory, Series A},
	volume={71},
	number={1},
	pages={168--172},
	year={1995},
	publisher={Elsevier}
}	
	
\bib{timo}{book}{
	title={A course on large deviations with an introduction to Gibbs measures},
	author={Rassoul-Agha, Firas},
	author={Sepp{\"a}l{\"a}inen, Timo},
	volume={162},
	year={2015},
	publisher={American Mathematical Soc.}
}

\end{biblist}
\end{bibdiv}

\end{document}